\newtheorem{theorem}{Theorem}[section]
\newtheorem{proposition}[theorem]{Proposition}
\newtheorem{lemma}[theorem]{Lemma}
\newtheorem{remark}[theorem]{Remark}
\title{Note on Error Bound {for} Trace Approximation of Products of Toeplitz Matrices}
\author
{Tetsuya Takabatake\footnote{tkbtk@hiroshima-u.ac.jp}}
\affil
{School of Economics, Hiroshima University\footnote{2-1 Kagamiyama 1-Chome, Higashi-Hiroshima, Hiroshima, Japan}}
\date{\today}
\begin{document}
\maketitle

\begin{abstract}
	We investigate error orders for integral limit approximations to traces of products of Toeplitz matrices generated by integrable functions on $[-\pi,\pi]$ having some singularities at the origin. 
	Even though a sharp error order of the above approximation is derived in Theorem~2 of \cite{Lieberman-Phillips-2004}, its proof contains an inaccuracy as pointed out by \cite{Ginovyan-Sahakyan-2013}. 
	In the present paper, we reinvestigate the claim given in Theorem~2 of \cite{Lieberman-Phillips-2004} and give an alternative proof of their claim. 	
\end{abstract}
\vspace{0.2cm}
\section{Introduction}
Let $p\in\mathbb{N}$, $\Pi:=[-\pi,\pi]$ and $\Pi_{0}:=\Pi\setminus\{0\}$. Denote by $L^{1}(\Pi)$ the set of integrable functions on $\Pi$ to $[-\infty,\infty]$. 
	For $f\in L^{1}(\Pi)$, we denote by $\widehat{f}(\tau)$ the $\tau$-th Fourier coefficient of $f$ defined by
	\begin{equation*}
		\widehat{f}(\tau):=\int_{\Pi}e^{\sqrt{-1}\tau x}f(x)\,\mathrm{d}x,\ \ \tau\in\mathbb{Z},
	\end{equation*}	
	and by $T_n(f)$ the $n\times n$-Toeplitz matrix whose $(i,j)$-element is given by $\widehat{f}(i-j)$ for each $i,j=1,\cdots,n$. 
	For $\alpha\in(-\infty,1)$, we introduce the function spaces $F_{\alpha}$ and $F_{\alpha}^{(1)}$ defined by
	\begin{align}
		&F_{\alpha}:=\left\{f\in L^{1}(\Pi):\sup_{x\in\Pi_{0}}|x|^{\alpha}|f(x)|<\infty\right\},\label{F-alpha}\\ 
		&F_{\alpha}^{(1)}:=\left\{f\in F_{\alpha}\cap C^{1}_{0}(\Pi):\sup_{x\in\Pi_{0}}|x|^{\alpha+1}\left|\frac{\mathrm{d}f}{\mathrm{d}x}(x)\right|<\infty\right\},\nonumber
	\end{align}
	where $C^{1}_{0}(\Pi)$ denotes the set of functions on $\Pi$ which are {continuously} differentiable on $\Pi_{0}$. 
Moreover, we write $x_{+}:=\max\{x,0\}$ for $x\in\mathbb{R}$. 
	
In the present paper, we will investigate on {the} convergence rate of the approximation error $\bar{E}_{n}^{p}$ defined by 
\begin{equation*}
	\bar{E}_{n}^{p}:=\left|\frac{1}{n}\mathrm{Tr}\left[\left(T_{n}(g)T_{n}(h)\right)^{p}\right]
	   		-(2\pi)^{2p-1}\int_{\Pi}\left[g(x)h(x)\right]^{p}\,\mathrm{d}x\right|
\end{equation*}
for functions $g\in{F}_{\alpha}$ and $h\in{F}_{\beta}$ with $\alpha,\beta\in(-\infty,1)$ under the following restriction {on} $\alpha$ and $\beta$:
\begin{equation}\label{para_rest1}
	p(\alpha+\beta)<1.
\end{equation}
Note that (\ref{para_rest1}) is necessary {for the integral in the definition of $\bar{E}_{n}^{p}$ to be finite.}

{The seminal work~\cite{Fox-Taqqu-1987} proved the convergence}
\begin{equation}\label{Error_Fox-Taqqu}
	\bar{E}_{n}^{p}=o(1)\ \ \mbox{as $n\to\infty$}
\end{equation}
for almost everywhere continuous even functions $g\in{F}_{\alpha}$ and $h\in{F}_{\beta}$, see Theorem~1 of \cite{Fox-Taqqu-1987}.  
After the work of \cite{Fox-Taqqu-1987}, \cite{Lieberman-Phillips-2004} investigated the convergence rate of $\bar{E}_{n}^{p}$ for even functions $g\in{F}_{\alpha}^{(1)}$ and $h\in{F}_{\beta}^{(1)}$ and claimed the following stronger version of (\ref{Error_Fox-Taqqu}) holds:
\begin{equation}\label{Error_Lieberman-Phillips}
	\bar{E}_{n}^{p}=o(n^{-1+p(\alpha+\beta)_{+}+\epsilon})\ \ \mbox{as $n\to\infty$}
\end{equation}
for any $\epsilon>0$, see Theorem~2 of \cite{Lieberman-Phillips-2004}. 
Unfortunately, {as pointed out by \cite{Ginovyan-Sahakyan-2013}}, the proof of Theorem~2 of \cite{Lieberman-Phillips-2004} 
contains an inaccuracy in the part that the power counting theorem developed by \cite{Fox-Taqqu-1987} was used, see Remark~3 of \cite{Ginovyan-Sahakyan-2013} for more details. 
To the best of the author's knowledge, the proof of (\ref{Error_Lieberman-Phillips}) has not been completed so far in the literature despite {its importance in application to some studies of statistics and econometrics}, 
e.g. see \cite{Lieberman-2005}, \cite{Lieberman-Phillips-2005}, \cite{Lieberman-Rosemarin-Rousseau-2012} 
and their related literature as well as our recent paper~\cite{Takabatake-2022-QWLE}. 
The aim of {the present paper} is 
to prove (\ref{Error_Lieberman-Phillips}) for $2\pi$-periodic functions $g\in{F}_{\alpha}^{(1)}$ and $h\in{F}_{\beta}^{(1)}$ with $\alpha,\beta\in(-\infty,1)$ under (\ref{para_rest1}) and to fill the gap in the literature. 
Other results related to the convergence of $\bar{E}_{n}^{p}$ 
can be found in \cite{Lieberman-Phillips-2004} and {a} comprehensive survey paper~\cite{Ginovyan-Sahakyan-Taqqu-2014}. 

The present paper is organized as follows. 
We introduce some additional notation in Section~\ref{Section_notation}. Our main result is summarized in Section~\ref{Section_main_results}. 
Preliminary lemmas used in the proof of the main result are given in Section~\ref{Section_preliminary_lemmas}. 
The proof of the main result is given in Section~\ref{Section_proof_main_result}.
\section{Notation}\label{Section_notation}
	Define by $D_{n}(x):=\sum_{t=1}^{n}e^{\sqrt{-1}xt}$ for $x\in\mathbb{R}$. 	
	Set $\Theta:=(-\infty,1)$ and define by $\psi_{p}(\theta):=\sum_{j=1}^{p}(\alpha_{j}+\beta_{j})$ and $\overline{\psi}_{p}(\theta):=\sum_{j=1}^{p}(\alpha_{j}+\beta_{j})_{+}$ for $\theta=(\alpha_{1},\cdots,\alpha_{p},\beta_{1},\cdots,\beta_{p})\in\Theta^{2p}$.
	Note that $\psi_{p}(\theta)_{+}\leq\overline{\psi}_{p}(\theta)$ holds. 
	Here we introduce 
	the restricted (parameter) space of $\Theta^{2p}$ defined by 
	\begin{equation*}
		\Theta_{1}^{2p}:=\left\{\theta\in\Theta^{2p}:\overline{\psi}_{p}(\theta)<1\right\}. 
	\end{equation*}
	Obviously $\max_{j=1,\cdots,p}(\alpha_{j}+\beta_{j})_{+}\leq\overline{\psi}_{p}(\theta)<1$ for any $\theta=(\alpha_{1},\cdots,\alpha_{p},\beta_{1},\cdots,\beta_{p})\in\Theta^{2p}_{1}$.

	For non-negative sequences $\{a_{n}\}_{n\in\mathbb{N}}$ and $\{b_{n}\}_{n\in\mathbb{N}}$, we write $a_{n}\lesssim b_{n}$ if there exists a constant $C>0$ such that $a_{n}\leq Cb_{n}$ for sufficiently large $n$. 
	For a set $\Theta_{\ast}$ of $\mathbb{R}^{q}$ and sequences of positive functions $\{a_{n}(\theta)\}_{n\in\mathbb{N}}$ and $\{b_{n}(\theta)\}_{n\in\mathbb{N}}$ on $\Theta_{\ast}$, we write $a_{n}(\theta)\lesssim_{u} b_{n}(\theta)$ (resp.~$a_{n}(\theta)=o_{u}(b_{n}(\theta))$ as $n\to\infty$) {compact} uniformly on $\Theta_{\ast}$ if $\sup_{\theta\in\mathcal{K}}|a_{n}(\theta)/b_{n}(\theta)|\lesssim 1$ (resp.~$\sup_{\theta\in\mathcal{K}}|a_{n}(\theta)/b_{n}(\theta)|=o(1)$ as $n\to\infty$) for any compact subset $\mathcal{K}$ of $\Theta_{\ast}$. 
	Moreover, for a set $A$ of $\mathbb{R}^{r}$ and sequences of functions $\{a_{n}(x,\theta)\}_{n\in\mathbb{N}}$ and $\{b_{n}(x,\theta)\}_{n\in\mathbb{N}}$ on $\mathbb{R}^{r}\times\Theta_{\ast}$ which are always positive on $A\times\Theta_{\ast}$, we write $a_{n}(x,\theta)\lesssim_{u} b_{n}(x,\theta)$ uniformly on $A\times\Theta_{\ast}$ if $\sup_{x\in A}|a_{n}(x,\theta)/b_{n}(x,\theta)|\lesssim_{u} 1$ {compact} uniformly on $\Theta_{\ast}$. 
	For notational simplicity, we omit the word ``{compact} uniformly on $\Theta_{\ast}$'' when $\Theta_{\ast}=\Theta_{1}^{2p}$. 
	
	Finally, we introduce the following function spaces $F$ and $F^{(1)}$:
	\begin{align*}
		&F:=\left\{f\in L^{1}(\Pi\times\Theta):~|x|^{\alpha}|f(x,\alpha)|\lesssim_{u} 1~\mbox{uniformly on $\Pi_{0}\times\Theta$}\right\},\\ 
		&F^{(1)}:=\left\{f\in F\cap C^{1}_{0}(\Pi\times\Theta):|x|^{\alpha+1}\left|\frac{\mathrm{d}f}{\mathrm{d}x}(x,\alpha)\right|\lesssim_{u} 1~\mbox{uniformly on $\Pi_{0}\times\Theta$}\right\},
	\end{align*}
	where $L^{1}(\Pi\times\Theta)$ (resp.~$C^{1}_{0}(\Pi\times\Theta)$) denotes the set of functions $f(x,\alpha)$ on $\Pi\times\Theta$ such that ${x}\mapsto f(x,\alpha)$ is integrable on $\Pi$~(resp.~{continuously} differentiable on $\Pi_{0}$) for each $\alpha\in\Theta$. 
	\section{Main Results}\label{Section_main_results}
	In the present paper, we will prove the slightly generalized version of (\ref{Error_Lieberman-Phillips}) stated in the following theorem. 
	\begin{theorem}\label{Thm_LP04_ext}
		Let $p\in\mathbb{N}$. For all $j=1,\cdots,p$, we assume $g_{j},h_{j}\in{F}^{(1)}$ and $x\mapsto g_{j}(x,\alpha)$ and $x\mapsto h_{j}(x,\beta)$ can be extended to periodic functions on $\mathbb{R}$ with period $2\pi$ for each $\alpha,\beta\in\Theta$. 
		Then
		\begin{equation*}
	   		E_{n}^{p}(\theta):=\left|\frac{1}{n}\mathrm{Tr}\left[\prod_{j=1}^{p}T_{n}(g_{j})T_{n}(h_{j})\right]
	   		-(2\pi)^{2p-1}\int_{\Pi}\prod_{j=1}^pg_{j}(x)h_{j}(x)\,\mathrm{d}x\right|=o_{u}(n^{-1+\overline{\psi}_{p}(\theta)+\epsilon})
		\end{equation*}
		as $n\to\infty$ for any $\epsilon>0$, where we denote by $g_{j}(x)\equiv g_{j}(x,\alpha_{j})$ and $h_{j}(x)\equiv h_{j}(x,\beta_{j})$ for $\alpha_{j},\beta_{j}\in\Theta$ for notational simplicity.
	\end{theorem}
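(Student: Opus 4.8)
The plan is to start from the integral representation of the trace. Writing $f_{2j-1}:=g_j$ and $f_{2j}:=h_j$ for $j=1,\dots,p$ and setting $x_{2p+1}:=x_1$, I would insert $\widehat{f}(\tau)=\int_{\Pi}e^{\sqrt{-1}\tau x}f(x)\,\mathrm{d}x$ into the trace and carry out the geometric summation defining $D_n$ to obtain
\[\frac1n\mathrm{Tr}\Bigl[\prod_{j=1}^pT_n(g_j)T_n(h_j)\Bigr]=\frac1n\int_{\Pi^{2p}}\Bigl(\prod_{i=1}^{2p}f_i(x_i)\Bigr)\prod_{i=1}^{2p}D_n(x_i-x_{i+1})\,\mathrm{d}x.\]
A direct computation, expanding each Dirichlet kernel and integrating out $x_2,\dots,x_{2p}$, shows that $\frac1n\int_{\Pi^{2p-1}}\prod_{i=1}^{2p}D_n(x_i-x_{i+1})\,\mathrm{d}x_2\cdots\mathrm{d}x_{2p}=(2\pi)^{2p-1}$ for every fixed $x_1$, so the second term in $E_n^p(\theta)$ is reproduced exactly by collapsing each $f_i(x_i)$ to $f_i(x_1)$. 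Hence
\[E_n^p(\theta)=\frac1n\Bigl|\int_{\Pi^{2p}}\Bigl(\prod_{i=1}^{2p}f_i(x_i)-\prod_{i=1}^{2p}f_i(x_1)\Bigr)\prod_{i=1}^{2p}D_n(x_i-x_{i+1})\,\mathrm{d}x\Bigr|,\]
which is the object I would estimate.

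Next I would reduce to model singularities: since the bounds are required only compact uniformly in $\theta$ and since membership in $F^{(1)}$ gives $|f_i(x)|\lesssim_u|x|^{-\gamma_i}$ and $|f_i'(x)|\lesssim_u|x|^{-\gamma_i-1}$ with $\gamma_i\in\{\alpha_j,\beta_j\}$, it suffices to control the integral with each $|f_i|$ and each increment replaced by these envelopes, the constants being uniform on compact $\theta$-sets. I would then telescope the difference $\prod_if_i(x_i)-\prod_if_i(x_1)$ twice: first replacing one variable at a time to produce a sum of terms each carrying a single increment $f_m(x_m)-f_m(x_1)$, and then splitting $f_m(x_m)-f_m(x_1)=\sum_k\bigl(f_m(x_{k+1})-f_m(x_k)\bigr)$ along the chain so that every increment sits on one adjacent bond $x_k,x_{k+1}$. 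The point of routing each increment onto an adjacent bond is that the neighbouring kernel $D_n(x_k-x_{k+1})$ localises $x_k$ and $x_{k+1}$ at scale $n^{-1}$, so the $C^1$ estimate $|f_m(x_{k+1})-f_m(x_k)|\lesssim_u|x_{k+1}-x_k|\max(|x_k|,|x_{k+1}|)^{-\gamma_m-1}$ converts the bond into the base gain of one factor of $n^{-1}$.

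The core of the argument, and the step where the proof of \cite{Lieberman-Phillips-2004} broke down, is the power counting: bounding multiple integrals of the form $\int_{\Pi^{2p}}\prod_i|x_i|^{-\gamma_i}\prod_i|D_n(x_i-x_{i+1})|\,\mathrm{d}x$ (and the increment variants) and showing the exponent of $n$ is exactly $\overline{\psi}_p(\theta)$ up to an arbitrarily small $\epsilon$. Rather than invoking the power counting theorem of \cite{Fox-Taqqu-1987}, I would do this by hand: insert the elementary bound $|D_n(x)|\lesssim\min(n,|x|^{-1})$, decompose $\Pi^{2p}$ dyadically according to the scales $|x_i|\sim2^{-k_i}$, and sum the resulting geometric series using the one-dimensional convolution estimates of Section~\ref{Section_preliminary_lemmas}. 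The delicate part, which I expect to be the main obstacle, is organising the summation so that the distinct singular clusters decouple and each pair $(g_j,h_j)$ contributes its own factor $n^{(\alpha_j+\beta_j)_+}$ rather than interacting: when $\alpha_j+\beta_j\le0$ the cluster is integrable and contributes $O(1)$, which is precisely the source of the positive part in $\overline{\psi}_p(\theta)=\sum_j(\alpha_j+\beta_j)_+$, while the boundary cases $\alpha_j+\beta_j=0$ and the $|x|^{-1}$ tails of $D_n$ produce logarithmic factors absorbed into $n^{\epsilon}$. The restriction $\theta\in\Theta_1^{2p}$ guaranteeing $\overline{\psi}_p(\theta)<1$ is exactly what keeps every scale sum convergent.

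Finally I would combine the base gain $n^{-1}$ from the adjacent increment with the power-counting bound $n^{\overline{\psi}_p(\theta)+\epsilon}$ to obtain $E_n^p(\theta)\lesssim_u n^{-1+\overline{\psi}_p(\theta)+\epsilon}$ for every $\epsilon>0$; since $\epsilon>0$ is arbitrary, replacing it by $\epsilon/2$ upgrades this to the required $o_u(n^{-1+\overline{\psi}_p(\theta)+\epsilon})$. Compact uniformity in $\theta$ is automatic throughout, because every constant depends only on the $F^{(1)}$-seminorms of the $g_j,h_j$ and on bounds for the exponents $\alpha_j,\beta_j$, all of which are uniform on compact subsets of $\Theta_1^{2p}$; the assumed $2\pi$-periodicity is used to make the adjacent increments and the dyadic decomposition well defined across the endpoints of $\Pi$.
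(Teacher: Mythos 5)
Your overall architecture --- rewrite the trace as a $2p$-fold singular integral against a cyclic product of Dirichlet kernels, reproduce $(2\pi)^{2p-1}\int_{\Pi}\prod_j g_jh_j$ by collapsing all arguments to a single variable, telescope the difference so each increment sits on one adjacent bond, and extract the factor $n^{-1}$ from that bond --- is the same skeleton as the paper's proof (Lemma~\ref{lem_err_rep}, the splitting $g_j(\overline{x}_{2j-1})h_j(\overline{x}_{2j})=k_j^{(0)}(\overline{x}_{2j-1})+k_j^{(1)}(\overline{x}_{2j-1})$, and Proposition~\ref{key_proposition}). But there is a genuine gap at the central estimate. Your $C^1$ increment bound is false as stated: for $|f'(x)|\lesssim|x|^{-\gamma-1}$ the mean value theorem gives $|f(x_{k+1})-f(x_k)|\le|x_{k+1}-x_k|\sup_{\xi}|f'(\xi)|$ with the supremum over the segment joining $x_k$ and $x_{k+1}$, which for $\gamma+1>0$ is $\min(|x_k|,|x_{k+1}|)^{-\gamma-1}$, not $\max(\cdots)^{-\gamma-1}$ (take $f(x)=|x|^{-\gamma}$, $x_k=\delta$, $x_{k+1}=1$: the left side is of order $\delta^{-\gamma}$, your bound is $O(1)$); and the bound fails altogether when the segment crosses the origin, where $f'$ is unbounded. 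This is exactly the difficulty that forces the paper's decomposition of $\Pi^{2p}$ into $W$ and $W^{c}$: on $W^{c}$ inequality (\ref{key_inequality_21}) keeps the whole chord at distance $c_{-}|\overline{x}_{2j-1}|>0$ from the singularity, legitimising (\ref{key_inequality_22}), while on $W$ the derivative bound is abandoned in favour of the crude bound and power counting. Applying the derivative bound globally, as you propose, also introduces the weight $|x|^{-\gamma-1}$ with exponent possibly $\ge 1$, which is non-integrable; the paper integrates it only against the cone constraint $|x_{2J}|<c_J|x_1|$ inherited from $W^{c}$ in the proof of Proposition~\ref{key_proposition}~(\ref{key_proposition_Wc}). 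Without some analogue of this domain decomposition your bond-by-bond telescoping cannot be closed.

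The second gap is that the step you yourself call ``the main obstacle'' --- the by-hand dyadic power counting showing that the cyclic chain of $2p$ kernels and $2p$ singular weights contributes $n^{\overline{\psi}_p(\theta)+\epsilon}$ with the pairs $(g_j,h_j)$ decoupling --- is asserted, not carried out. Since the entire reason Theorem~2 of \cite{Lieberman-Phillips-2004} is regarded as unproven is a flaw in precisely this power-counting step, deferring it means the theorem is not established. The paper's resolution is to perform the counting only on $W$, where Lemma~\ref{lemma_Dn}~(\ref{lemma_Dn3}) with exponents $\eta_j$ satisfying $\sum_{j}\eta_j<\overline{\psi}_p(\theta)+\epsilon$ reduces matters to Proposition~6.1 of \cite{Fox-Taqqu-1987}, and to replace power counting on $W^{c}$ by an explicit two-variable integral over $A_J$; your plan would need a complete substitute for both.
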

	\section{Preliminary Lemmas}\label{Section_preliminary_lemmas}
	First we summarize useful properties of $D_{n}(x)$ used in the proof of Theorem~\ref{Thm_LP04_ext} without proofs. 
	See 
	\cite{Fox-Taqqu-1987}, \cite{Dahlhaus-1989} and \cite{Dahlhaus-2006} for more details. 
	\begin{lemma}\label{lemma_Dn}
		\begin{enumerate}[$(1)$]
			\item\label{lemma_Dn1} 
			$D_{n}(0)=n$, $D_{n}(-x)=D_{n}(x)^{\ast}$ for any $x\in\Pi$ and $D_{n}(x)$ is $2\pi$-periodic.
			\item\label{lemma_Dn2}
			$\int_{\Pi}D_{n}(x-y)D_{n}(y-z)\,\mathrm{d}y={2\pi}D_{n}(x-z)$ for any $x,z\in\Pi$. 
			\item\label{lemma_Dn3}
			Set $L_{n}(x):=\min(|x|^{-1},n)$. Then we have 
			$|D_{n}(x)|\lesssim L_{n}(x)$ for any $x\in\Pi$ and $L_{n}(x)$ satisfies the following properties:
			\begin{enumerate}[$(a)$]
			 \item $L_{n}(x)\leq\min\{n,2n(1+n|x|)^{-1},n^{\eta}|x|^{\eta-1}\}$ for any $\eta\in[0,1]$ and $x\in\Pi$.
			 \item 
			 $\int_{\Pi}L_{n}(x-y)L_{n}(y)\,\mathrm{d}y\lesssim L_{n}(x)\log{n}$ for any $x\in\Pi$.
			\end{enumerate}
		\end{enumerate}
	\end{lemma}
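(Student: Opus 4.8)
The plan is to establish the three parts in increasing order of difficulty, isolating the convolution estimate in (3)(b) as the only genuinely substantial step. Part (1) is immediate from the definition $D_{n}(x)=\sum_{t=1}^{n}e^{\sqrt{-1}xt}$: setting $x=0$ gives $\sum_{t=1}^{n}1=n$; replacing $x$ by $-x$ conjugates each summand because $x$ is real; and $2\pi$-periodicity follows from $e^{\sqrt{-1}2\pi t}=1$ for $t\in\mathbb{Z}$. For part (2) I would expand both kernels as finite sums, interchange the (finite) summation with the integral, and invoke the orthogonality relation that $\int_{\Pi}e^{\sqrt{-1}y(t-s)}\,\mathrm{d}y$ equals $2\pi$ when $s=t$ and vanishes otherwise. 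Only the diagonal terms survive, leaving $2\pi\sum_{s=1}^{n}e^{\sqrt{-1}(x-z)s}=2\pi D_{n}(x-z)$.

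For the pointwise bound $|D_{n}(x)|\lesssim L_{n}(x)$ in part (3), I would sum the geometric series to obtain $|D_{n}(x)|=|\sin(nx/2)|/|\sin(x/2)|$ for $x\in\Pi_{0}$, bound the numerator by $1$, and use $|\sin(x/2)|\geq|x|/\pi$ on $\Pi_{0}$ to get $|D_{n}(x)|\leq\pi|x|^{-1}$; combined with the trivial bound $|D_{n}(x)|\leq n$ this yields $|D_{n}(x)|\leq\min(\pi|x|^{-1},n)\lesssim L_{n}(x)$. The three inequalities in (a) are elementary: $L_{n}\leq n$ is immediate; the bound $L_{n}(x)\leq 2n(1+n|x|)^{-1}$ follows from a two-case split according to whether $n|x|\geq 1$ or $n|x|<1$; and the interpolation bound $L_{n}(x)\leq n^{\eta}|x|^{\eta-1}$ comes from writing $L_{n}=L_{n}^{\eta}L_{n}^{1-\eta}$ and using $L_{n}\leq n$ in the first factor and $L_{n}\leq|x|^{-1}$ in the second.

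The main obstacle is the convolution estimate (3)(b), where the logarithmic factor must be produced and controlled uniformly in $x$. First I would record the one-dimensional bound $\int_{\Pi}L_{n}(y)\,\mathrm{d}y\lesssim\log n$, obtained by splitting $[0,\pi]$ at $y=1/n$ and integrating the constant $n$ on $[0,1/n]$ against $y^{-1}$ on $[1/n,\pi]$. To treat the full integral I would exploit $|y|+|x-y|\geq|x|$, so that $\max(|y|,|x-y|)\geq|x|/2$, and split $\Pi$ into $S_{A}:=\{y:|x-y|\geq|x|/2\}$ and its complement $S_{B}\subseteq\{y:|y|>|x|/2\}$. On $S_{A}$ the elementary inequality $\min(2a,b)\leq 2\min(a,b)$ gives $L_{n}(x-y)\leq 2L_{n}(x)$, so that piece is bounded by $2L_{n}(x)\int_{\Pi}L_{n}(y)\,\mathrm{d}y\lesssim L_{n}(x)\log n$; on $S_{B}$ the symmetric estimate gives $L_{n}(y)\leq 2L_{n}(x)$, and after the substitution $u=x-y$ the remaining factor integrates over a length-$2\pi$ interval to yield $\lesssim L_{n}(x)\log n$ as well (consistent with the $2\pi$-periodic interpretation used in part (2)). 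Summing the two contributions gives the claim, the logarithm originating solely from $\int_{\Pi}L_{n}\lesssim\log n$.
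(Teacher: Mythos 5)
Your proof is correct. The paper itself states this lemma without proof, deferring to \cite{Fox-Taqqu-1987}, \cite{Dahlhaus-1989} and \cite{Dahlhaus-2006}, and your argument is precisely the standard one found there: the closed form $|D_n(x)|=|\sin(nx/2)|/|\sin(x/2)|$ with $|\sin(x/2)|\geq|x|/\pi$ for the pointwise bound, and the split of $\Pi$ according to whether $|x-y|\geq|x|/2$ or $|y|\geq|x|/2$ for the convolution estimate, with the logarithm coming from $\int_\Pi L_n\lesssim\log n$. One cosmetic remark: on your set $S_B$ you have $|x-y|<|x|/2\leq\pi/2$, so the substitution $u=x-y$ already lands in $\Pi$ and no appeal to a length-$2\pi$ interval or periodic interpretation is needed; in any case that step is harmless since $\int_I L_n(u)\,\mathrm{d}u\lesssim\log n$ for any interval $I$ of length at most $2\pi$ contained in $[-2\pi,2\pi]$.
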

	By a straightforward calculation, we can also prove the following result, see Appendix~\ref{Appendix_Lemma} for the proof.
	\begin{lemma}\label{lem_err_rep}
		Let $p\in\mathbb{N}$. For all $j=1,\cdots,p$, we assume $g_{j}\in{F}_{\alpha_{j}}$, $h_{j}\in{F}_{\beta_{j}}$ for $\alpha_{j},\beta_{j}\in\Theta$, where the function space ${F}_{\alpha}$ is defined by $(\ref{F-alpha})$, and $g_{j}$ and $h_{j}$ can be extended to periodic functions on $\mathbb{R}$ with period $2\pi$. 
		Then		\begin{align*}
			\mathrm{Tr}\left[\prod_{j=1}^pT_n(g_{j})T_n(h_{j})\right]
			=&\int_{\Pi^{2p}}\prod_{j=1}^{p}g_{j}(y_{2j-1})h_{j}(y_{2j})\cdot\prod_{j=1}^{2p}D_{n}(y_{j}-y_{j-1})\,\mathrm{d}y_{1}\cdots\,\mathrm{d}y_{2p}\\
			=&\int_{\Pi^{2p}}\left(\prod_{j=1}^{p}g_{j}(\overline{x}_{2j-1})h_{j}(\overline{x}_{2j})\right)D_{n}(\overline{x}_{2p}-x_{1})^{\ast}\prod_{j=2}^{2p}D_{n}(x_{j})\,\mathrm{d}x_{1}\cdots\,\mathrm{d}x_{2p},
		\end{align*}
		where we denote by $y_{0}:=y_{2p}$ and by $\overline{x}_{j}:=\sum_{k=1}^{j}x_{k}$ for $j=1,\cdots,2p$ for notational simplicity. 
	\end{lemma}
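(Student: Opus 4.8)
\emph{Proof strategy.} The plan is to establish both identities by a direct expansion of the trace, a reduction of an exponential sum to the kernels $D_{n}$, and a single unimodular change of variables in which periodicity is used to keep the domain equal to $\Pi^{2p}$.

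First I would relabel the $2p$ factors as $f_{2j-1}:=g_{j}$ and $f_{2j}:=h_{j}$ and expand the trace of the product over matrix indices. Using $(T_{n}(f))_{ik}=\widehat{f}(i-k)$ and the cyclic convention $i_{2p+1}:=i_{1}$,
\begin{equation*}
	\mathrm{Tr}\left[\prod_{j=1}^{p}T_{n}(g_{j})T_{n}(h_{j})\right]
	=\sum_{i_{1},\ldots,i_{2p}=1}^{n}\prod_{l=1}^{2p}\widehat{f_{l}}(i_{l}-i_{l+1}).
\end{equation*}
Substituting $\widehat{f_{l}}(\tau)=\int_{\Pi}e^{\sqrt{-1}\tau y_{l}}f_{l}(y_{l})\,\mathrm{d}y_{l}$ and interchanging the finite sums with the integrals --- legitimate since each $f_{l}\in L^{1}(\Pi)$, so that $\prod_{l}|f_{l}(y_{l})|$ is integrable on $\Pi^{2p}$ with $\int_{\Pi^{2p}}\prod_{l}|f_{l}(y_{l})|\,\mathrm{d}y=\prod_{l}\|f_{l}\|_{L^{1}(\Pi)}<\infty$ --- reduces the claim to evaluating $\sum_{i_{1},\ldots,i_{2p}=1}^{n}\exp\!\left(\sqrt{-1}\sum_{l=1}^{2p}(i_{l}-i_{l+1})y_{l}\right)$.

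Next I would simplify the exponent by reindexing: with the cyclic convention $y_{0}:=y_{2p}$ one checks that $\sum_{l=1}^{2p}(i_{l}-i_{l+1})y_{l}=\sum_{l=1}^{2p}i_{l}(y_{l}-y_{l-1})$, so the sum factors over $l$ and each factor $\sum_{i_{l}=1}^{n}e^{\sqrt{-1}i_{l}(y_{l}-y_{l-1})}$ is exactly $D_{n}(y_{l}-y_{l-1})$. This produces the first displayed representation.

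For the second representation I would perform the change of variables $x_{1}=y_{1}$, $x_{l}=y_{l}-y_{l-1}$ ($l=2,\ldots,2p$), so that $y_{l}=\overline{x}_{l}$; this map is unipotent, hence has Jacobian $1$. It turns $D_{n}(y_{l}-y_{l-1})$ into $D_{n}(x_{l})$ for $l=2,\ldots,2p$, while the wrap-around factor $D_{n}(y_{1}-y_{0})=D_{n}(x_{1}-\overline{x}_{2p})$ becomes $D_{n}(\overline{x}_{2p}-x_{1})^{\ast}$ by Lemma~\ref{lemma_Dn}~(\ref{lemma_Dn1}), and the amplitude becomes $\prod_{j=1}^{p}g_{j}(\overline{x}_{2j-1})h_{j}(\overline{x}_{2j})$. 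The one genuinely delicate point --- and the step I expect to be the main obstacle --- is that this substitution a priori replaces $\Pi^{2p}$ by a sheared region, so I must argue that the domain can be restored to $\Pi^{2p}$. Here the hypothesis that every $g_{j},h_{j}$ extends $2\pi$-periodically is essential: together with the $2\pi$-periodicity of $D_{n}$ from Lemma~\ref{lemma_Dn}~(\ref{lemma_Dn1}), the integrand written in the $x$-variables is separately $2\pi$-periodic in each $x_{l}$ (shifting $x_{l}$ by $2\pi$ shifts $\overline{x}_{k}$ by $2\pi$ only for $k\geq l$, leaving the periodic amplitude and kernels unchanged), and the substitution maps the lattice $(2\pi\mathbb{Z})^{2p}$ onto itself. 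Thus it descends to a measure-preserving bijection of the torus, and integrating over any fundamental domain --- in particular $\Pi^{2p}$ --- yields the same value; concretely this is carried out by Fubini, substituting in the order $l=2p,2p-1,\ldots,2$ and, at each stage, using periodicity in the variable being shifted to reset its range of integration to $\Pi$.
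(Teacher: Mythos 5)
Your proposal is correct and follows essentially the same route as the paper's own proof in Appendix~\ref{Appendix_Lemma}: expand the trace over matrix indices, reduce the exponential sum to a product of $D_{n}$ kernels via the reindexing $\sum_{l}(i_{l}-i_{l+1})y_{l}=\sum_{l}i_{l}(y_{l}-y_{l-1})$, and then apply the unimodular change of variables $x_{l}=y_{l}-y_{l-1}$, using the $2\pi$-periodicity of $D_{n}$, $g_{j}$ and $h_{j}$ to restore the sheared domain to $\Pi^{2p}$. Your treatment of the sum--integral interchange and of the domain restoration is somewhat more explicit than the paper's, but the argument is the same.
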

	\section{Proof of Theorem~\ref{Thm_LP04_ext}}\label{Section_proof_main_result}
	\subsection{Outline of Proof of Theorem~\ref{Thm_LP04_ext}}
	First we introduce some notation used in the proof of Theorem~\ref{Thm_LP04_ext}. 
	Fix $p\in\mathbb{N}$ and write $E_{n}(\theta):=E_{n}^{p}(\theta)$ for notational simplicity.  
	Set
	\begin{equation*}
		I(\theta):=(2\pi)^{2p-1}\int_{\Pi}\prod_{j=1}^pg_{j}(x)h_{j}(x)\,\mathrm{d}x,\ \ \theta\in\Theta_{1}^{2p}.
	\end{equation*}
	Fix $c>1$ 
	and define by
	\begin{align*}
		&W_{j}:=
			\left\{\mathbf{x}=(x_{1},\cdots,x_{2p})\in\mathbb{R}^{2p}:|\overline{x}_{j}|\leq c|x_{j+1}|\right\},\ \ j=1,\cdots,2p-1,\\
		&W_{2p}:=
			\left\{\mathbf{x}=(x_{1},\cdots,x_{2p})\in\mathbb{R}^{2p}:|\overline{x}_{2p}|\leq c|\overline{x}_{2p}-x_{1}|\right\}.
	\end{align*}
	Set $W:=\bigcup_{j=1}^{2p}W_{j}$, $k_{j}^{(0)}(x):=g_{j}(x)h_{j}(x)$ and $k_{j}^{(1)}(x):=g_{j}(x)(h_{j}(x+x_{2j})-h_{j}(x))$ for $j=1,\cdots,p$, and $I_{n}^{\pi}(\theta):=I_{n,1}^{\pi}(\theta)+I_{n,2}^{\pi}(\theta)$ is defined by
	\begin{align*}
		&I_{n,1}^{\pi}(\theta):=
		\int_{\Pi^{2p}\cap{W}}\prod_{j=1}^{p}k_{j}^{(\pi_{j})}(\overline{x}_{2j-1})\cdot D_{n}(\overline{x}_{2p}-x_{1})^{\ast}\prod_{j=2}^{2p}D_{n}(x_{j})\,\mathrm{d}x_{1}\cdots\,\mathrm{d}x_{2p},\\
		&I_{n,2}^{\pi}(\theta):=\int_{\Pi^{2p}\cap{W}^{c}}\prod_{j=1}^{p}k_{j}^{(\pi_{j})}(\overline{x}_{2j-1})\cdot D_{n}(\overline{x}_{2p}-x_{1})^{\ast}\prod_{j=2}^{2p}D_{n}(x_{j})\,\mathrm{d}x_{1}\cdots\,\mathrm{d}x_{2p}
	\end{align*}
	for $\pi=(\pi_{1},\cdots,\pi_{p})\in\{0,1\}^{p}$. 
	Write $I_{n}(\theta):=I_{n}^{(0,\cdots,0)}(\theta)$ for notational simplicity. 
	
	Since $g_{j}(\overline{x}_{2j-1})h_{j}(\overline{x}_{2j})=k_{j}^{(0)}(\overline{x}_{2j-1})+k_{j}^{(1)}(\overline{x}_{2j-1})$, we obtain the following upper bound for $E_{n}(\theta)$ using Lemma~\ref{lem_err_rep}:
	\begin{equation}\label{En_upper_bound}
		E_{n}(\theta)\lesssim_{u}n^{-1}|I_{n}(\theta)-nI(\theta)| +n^{-1}\sum_{\pi=(\pi_{1},\cdots,\pi_{p})\in\{0,1\}^{p}\setminus\{(0,\cdots,0)\}}|I_{n}^{\pi}(\theta)|.
	\end{equation}
	Thanks to (\ref{En_upper_bound}), Theorem~\ref{Thm_LP04_ext} follows once we have proven the following results.
	\begin{proposition}\label{key_proposition}
		Suppose the assumptions given in Theorem~$\ref{Thm_LP04_ext}$ {to hold}.
		\begin{enumerate}[$(1)$]
			\item\label{key_proposition_W}
			For any $\epsilon>0$ and $\pi\in\{0,1\}^{p}$, we have $I_{n,1}^{\pi}(\theta)=o_{u}(n^{\overline{\psi}_{p}(\theta)+\epsilon})$ as $n\to\infty$.
			\item\label{key_proposition_Wc}
			For any $\pi\in\{0,1\}^{p}\setminus\{(0,\cdots,0)\}$, we have $I_{n,2}^{\pi}(\theta)=o_{u}(n^{\psi_{p}(\theta)_{+}}(\log{n})^{2p})$ as $n\to\infty$.
			\item\label{key_proposition_main_term}
			For any $\epsilon>0$, we have $|I_{n}(\theta)-nI(\theta)|=o_{u}(n^{\overline{\psi}_{p}(\theta)+\epsilon})$ as $n\to\infty$.
		\end{enumerate}
	\end{proposition}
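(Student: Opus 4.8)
The plan is to prove all three estimates by the power-counting philosophy of \cite{Fox-Taqqu-1987}, but to carry out the integration against the Dirichlet kernels \emph{sequentially and by hand} rather than invoking the power counting theorem as a black box; this is precisely the step flagged in Remark~3 of \cite{Ginovyan-Sahakyan-2013}. The two tools I would use throughout are Lemma~\ref{lemma_Dn}: the pointwise bound $|D_n(x)|\lesssim L_n(x)$ together with the interpolation inequality $L_n(x)\le n^\eta|x|^{\eta-1}$ ($\eta\in[0,1]$) and the sub-convolution estimate $\int_\Pi L_n(x-y)L_n(y)\,\mathrm{d}y\lesssim L_n(x)\log n$; and the pointwise control of the kernels coming from membership in $F^{(1)}$. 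The representation of $\mathrm{Tr}[\prod_j T_n(g_j)T_n(h_j)]$ in Lemma~\ref{lem_err_rep} is the starting point for every part.

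First I would record the elementary pointwise bounds that feed all three estimates. From $g_j,h_j\in F$ one gets $|k_j^{(0)}(x)|\lesssim_u|x|^{-(\alpha_j+\beta_j)}$, and from the derivative bound in $F^{(1)}$ together with the mean value theorem one gets, for $k_j^{(1)}(x)=g_j(x)(h_j(x+x_{2j})-h_j(x))$, an estimate of the form $|k_j^{(1)}(x)|\lesssim_u|x|^{-\alpha_j}|x_{2j}|\,\xi^{-\beta_j-1}$, where $\xi$ is the point on the segment joining $x$ and $x+x_{2j}$ closest to the origin. The role of the sets $W_j$ and of their complement is exactly to make this awkward factor tractable: on $W^c=\bigcap_j W_j^c$ the chain $|x_{j+1}|<c^{-1}|\overline{x}_j|$ forces all partial sums $\overline{x}_1,\dots,\overline{x}_{2p}$ to be of the same order as $|x_1|$, so that every kernel argument stays away from the origin in a controlled way and the correction brought by $k_j^{(1)}$ is genuinely of relative size $|x_{2j}|/|x_1|$. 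For part (\ref{key_proposition_Wc}) I would then work on $W^c$, replace each $|\overline{x}_{2j-1}|$ by $|x_1|$ up to constants, collect the factor $|x_1|^{-\psi_p(\theta)}$ from the $k^{(0)}$-factors, and integrate out $x_2,\dots,x_{2p}$ one at a time against the kernels via the sub-convolution bound; the power of $n$ is read off from $L_n(x)\le n^\eta|x|^{\eta-1}$ with $\eta$ chosen according to the sign of the running exponent, producing $n^{\psi_p(\theta)_+}$, and each integration costs a factor $\log n$, giving $(\log n)^{2p}$. The at-least-one $k^{(1)}$-factor present when $\pi\ne(0,\dots,0)$ converts one logarithmically divergent integration (in the variable $x_{2j}$) into a convergent one, costing one power of $\log n$ and thereby upgrading the natural $O$-bound to the claimed little-$o$.

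For part (\ref{key_proposition_W}) I would instead split $W=\bigcup_j W_j$ and treat each $W_j$ separately: on $W_j$ the constraint $|\overline{x}_j|\le c|x_{j+1}|$ lets me trade the kernel $D_n(x_{j+1})$ for a power of the neighbouring variable, and the same sequential integration, now with the exponents handled factor-by-factor through $(\alpha_j+\beta_j)_+$, yields $n^{\overline{\psi}_p(\theta)+\epsilon}$, the $\epsilon$ absorbing the accumulated logarithms. For part (\ref{key_proposition_main_term}) I would first observe, via the reproducing property $\int_\Pi D_n(x-y)D_n(y-z)\,\mathrm{d}y=2\pi D_n(x-z)$ of Lemma~\ref{lemma_Dn}\eqref{lemma_Dn2}, that freezing all kernel arguments at $x_1$ collapses the Dirichlet kernels and gives exactly $nI(\theta)=\int_{\Pi^{2p}}\prod_{j=1}^p k_j^{(0)}(x_1)\,D_n(\overline{x}_{2p}-x_1)^\ast\prod_{j=2}^{2p}D_n(x_j)\,\mathrm{d}x_1\cdots\mathrm{d}x_{2p}$. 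Since $I_n(\theta)=I_{n,1}^{(0,\dots,0)}(\theta)+I_{n,2}^{(0,\dots,0)}(\theta)$ ranges over all of $\Pi^{2p}$, the difference $I_n(\theta)-nI(\theta)$ is the integral of $\prod_j k_j^{(0)}(\overline{x}_{2j-1})-\prod_j k_j^{(0)}(x_1)$ against the same kernels, which I would split over $W$ and $W^c$. On $W$ the bound of part (\ref{key_proposition_W}) (applied to both terms, the frozen one by the same argument) already gives $o_u(n^{\overline{\psi}_p(\theta)+\epsilon})$; on $W^c$ I would telescope the difference of products, apply the mean value theorem to each $C^1$-factor $k_j^{(0)}$, use comparability of the partial sums to convert the difference into a sum of terms each carrying an extra relative factor $|x_i|/|x_1|$, and integrate as before.

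The main obstacle throughout is the exponent bookkeeping in the sequential integration: one must choose the interpolation parameter $\eta$ in $L_n(x)\le n^\eta|x|^{\eta-1}$ correctly at each of the $2p$ steps, keep the running exponent inside the range where the remaining $x$-integral converges, and verify that the resulting power of $n$ is exactly $\overline{\psi}_p(\theta)$ (resp.\ $\psi_p(\theta)_+$) compact uniformly on $\Theta_1^{2p}$. This is precisely where the original argument of \cite{Lieberman-Phillips-2004} appealed incorrectly to the Fox--Taqqu power counting theorem, so the burden of the proof is to make the decomposition $W=\bigcup_j W_j$ and the step-by-step integration self-contained and sign-robust, in particular handling uniformly the cases $\alpha_j+\beta_j>0$ and $\alpha_j+\beta_j\le0$ that the notation $(\cdot)_+$ encodes.
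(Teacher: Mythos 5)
Your overall architecture --- splitting $\Pi^{2p}$ into $W$ and $W^{c}$, using the comparability $c_{-}|\overline{x}_{j-1}|\leq|\overline{x}_{j}|\leq c_{+}|\overline{x}_{j-1}|$ on $W^{c}$ together with the mean value theorem to extract the factor $|x_{2j}|\,|x_{1}|^{-\psi_{p}(\theta)-1}$ from each $k_{j}^{(1)}$, then integrating out the remaining kernels by the convolution bound and finishing with an explicit two--dimensional integral --- is exactly the paper's proof of part~(\ref{key_proposition_Wc}), and your use of $L_{n}(x)\le n^{\eta}|x|^{\eta-1}$ with $\sum_{j}\eta_{j}$ squeezed between $\overline{\psi}_{p}(\theta)$ and $\overline{\psi}_{p}(\theta)+\epsilon$ is exactly the paper's proof of part~(\ref{key_proposition_W}). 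One point of diagnosis, though: the paper does \emph{not} avoid the Fox--Taqqu power counting machinery in part~(\ref{key_proposition_W}); after the $L_{n}$ bound it reduces the claim to $\sum_{i}\int_{\Pi^{2p}\cap W_{i}}f_{n}\lesssim_{u}1$ and proves this ``in the similar way to the proof of Proposition~6.1 of \cite{Fox-Taqqu-1987}''. The error in \cite{Lieberman-Phillips-2004} was not the power counting theorem itself but its application on the region $F_{t}$, where no integrable dominating function exists; the fix is to choose the decomposition so that power counting is invoked only on $W$. Your proposal to replace this by a bare-hands ``sequential integration, trading $D_{n}(x_{j+1})$ for a power of the neighbouring variable'' is precisely the content of that proposition and is not a one-line reduction: on $W_{i}$ the set of difference forms spans a $(2p-1)$-dimensional subspace with total exponent $\sum_{j}\eta_{j}-2p$, and the naive variable-by-variable integration does not close without the constraint $|\overline{x}_{i}|\le c|x_{i+1}|$ being used at the right moment. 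As written, this step of your argument is an announcement rather than a proof.

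For part~(\ref{key_proposition_main_term}) you take a genuinely different route. The paper first uses the reproducing property $\int_{\Pi}D_{n}(x-y)D_{n}(y-z)\,\mathrm{d}y=2\pi D_{n}(x-z)$ to collapse $I_{n}(\theta)$ to a $p$-fold integral and then telescopes one variable at a time, so that every error term is a difference $\overline{s}_{J}(y_{p-J})=s_{J}(y_{p-J+1})-s_{J}(y_{p-J})$ of a kernel at \emph{adjacent} arguments --- i.e.\ exactly the $k^{(1)}$-structure already disposed of in parts~(\ref{key_proposition_W})--(\ref{key_proposition_Wc}). You instead freeze all arguments at $x_{1}$ at once and split the resulting difference over $W$ and $W^{c}$. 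Your identity $nI(\theta)=\int_{\Pi^{2p}}\prod_{j}k_{j}^{(0)}(x_{1})D_{n}(\overline{x}_{2p}-x_{1})^{\ast}\prod_{j\ge2}D_{n}(x_{j})\,\mathrm{d}\mathbf{x}$ is correct, and the $W^{c}$ part goes through by telescoping the product and reusing the part~(\ref{key_proposition_Wc}) computation. The gap is your claim that the frozen term on $W$ is covered ``by the same argument'' as part~(\ref{key_proposition_W}): it is not. The frozen integrand carries the entire singularity $|x_{1}|^{-\psi_{p}(\theta)}$ on a single coordinate, a configuration of linear forms and exponents different from the one in $f_{n}$, and this is exactly the type of integral ($\int\prod_{j}g_{j}(y_{1})h_{j}(y_{1})\prod_{j}D_{n}(y_{j}-y_{j+1})$) that Remark~3 of \cite{Ginovyan-Sahakyan-2013} identifies as the failure point of \cite{Lieberman-Phillips-2004}. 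The estimate does in fact hold on $W$ --- on each $W_{i}$ the constraint forces $|x_{1}|\lesssim|x_{i+1}|$ (resp.\ $|x_{1}|\lesssim|\overline{x}_{2p}-x_{1}|$ on $W_{2p}$), which after integrating $x_{1}$ converts $|x_{1}|^{-\psi_{p}(\theta)}$ into $|x_{i+1}|^{1-\psi_{p}(\theta)}$ and allows the interpolation bound with $\eta=\psi_{p}(\theta)_{+}+\epsilon'$ to absorb it --- but this is a separate power-counting verification that you must carry out; it cannot be cited from part~(\ref{key_proposition_W}). The paper's telescoping organisation is designed precisely to avoid ever having to bound this frozen integral.
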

	\begin{remark}\rm 
		Our decomposition of the integration domain $\Pi^{2p}$ into the disjoint sets $\Pi^{2p}\cap{W}$ and $\Pi^{2p}\cap{W}^{c}$ is different from what the previous studies~\cite{Fox-Taqqu-1987} and \cite{Lieberman-Phillips-2004} used. 
		We firstly recall their decomposition of the integration domain and explain several connections between these decompositions. 
		In the proofs of Theorem~1 of \cite{Fox-Taqqu-1987} and Theorem~2 of \cite{Lieberman-Phillips-2004}, they decompose $\Pi^{2p}$ into the following three disjoint sets:
		\begin{equation*}
			E_{t}:=U_{\pi}\setminus\{\widetilde{W}^{\prime}\cup{U}_{t}\},\ \ F_{t}:=U_{t}\setminus\widetilde{W}^{\prime},\ \ G:=U_{\pi}\cap\widetilde{W}^{\prime}
		\end{equation*}
		for $t\in(0,\pi]$, where $U_{t}:=[-t,t]^{2p}$ and $\widetilde{W}^{\prime}:=\bigcup_{j=1}^{2p}\widetilde{W}_{j}^{\prime}$ is defined by
		\begin{equation*}
			\widetilde{W}_{j}^{\prime}:=\left\{\mathbf{y}=(y_{1},\cdots,y_{2p})\in\mathbb{R}^{2p}:|y_{j}|\leq\frac{|y_{j+1}|}{2}\right\},\ \ j=1,\cdots,2p,
		\end{equation*}
		where we write $y_{2p+1}:=y_{1}$ for notational simplicity. 
		By the change of variables $x_{1}=y_1$ and $x_{j}=y_{j}-y_{j-1}$ for $j=2,\cdots,2p$, the domain $W$ is transformed to $\widetilde{W}$ defined by 
		\begin{equation*}
			\widetilde{W}:=\bigcup_{j=1}^{2p}\widetilde{W}_{j},\ \ \widetilde{W}_{j}:=\left\{(y_1,\cdots,y_{2p})\in\mathbb{R}^{2p}:|y_{j}|\leq c|y_{j+1}-y_{j}|\right\},\ \ j=1,\cdots,2p.
		\end{equation*}
		Note that the Jacobian determinant of the above trasformation on $\mathbb{R}^{2p}$ to itself is equal to one. 
		If $c\geq 1$, we have $\widetilde{W}_{j}^{\prime}\subsetneq\widetilde{W}_{j}$ for each $j=1,\cdots,2p$ so that we obtain $\Pi^{2p}\cap\widetilde{W}^{c}\subsetneq E_{t}\cup{F}_{t}=\Pi^{2p}\setminus\widetilde{W}^{\prime}$ and $G\subsetneq\Pi^{2p}\cap\widetilde{W}$. 
		
		Let $\alpha_{1}=\alpha_{2}=\cdots=\alpha_{p}$ and $\beta_{1}=\beta_{2}=\cdots=\beta_{p}$. 
		Taking Remark~3 of \cite{Ginovyan-Sahakyan-2013} into account, it may be impossible to prove
		\begin{align*}
			&\int_{F_{t}}\left|\prod_{j=1}^{p}g_{j}(y_{2j-1})h_{j}(y_{2j})\prod_{j=1}^{2p}D_{n}(y_{j}-y_{j+1})\right|\,\mathrm{d}y_{1}\cdots\,\mathrm{d}y_{2p}=o(n^{\overline{\psi}_{p}(\theta)+\epsilon})\ \ \mbox{as $n\to\infty$},\\
			&\int_{F_{t}}\left|\prod_{j=1}^{p}g_{j}(y_{1})h_{j}(y_{1})\prod_{j=1}^{2p}D_{n}(y_{j}-y_{j+1})\right|\,\mathrm{d}y_{1}\cdots\,\mathrm{d}y_{2p}=o(n^{\overline{\psi}_{p}(\theta)+\epsilon})\ \ \mbox{as $n\to\infty$}
		\end{align*}
		for any $\epsilon>0$ as stated in the proof of Theorem~2 of \cite{Lieberman-Phillips-2004} 
		because we can not obtain ``good'' dominating functions of the integrands on $F_{t}$ for $t\in(0,\pi]$. 
		On the other hand, 
		we can prove Theorem~\ref{Thm_LP04_ext} using our decomposition of the integration domain $\Pi^{2p}$ because we can prove Proposition~\ref{key_proposition}~(\ref{key_proposition_W}) using Lemma~\ref{lemma_Dn} and the power counting theorem by \cite{Fox-Taqqu-1987}, and obtain a dominating function of the integrand in $I_{n,2}^{\pi}(\theta)$ which is good enough to prove Proposition~\ref{key_proposition}~(\ref{key_proposition_Wc}), see Section~\ref{Section_proof_key_prop1} and Section~\ref{Section_proof_key_prop2} respectively for more details. 
	\end{remark}
	\subsection{Proof of Proposition~\ref{key_proposition}~(\ref{key_proposition_W})}\label{Section_proof_key_prop1}
	For any $\epsilon>0$ and $\theta_{j}=(\alpha_{j},\beta_{j})\in\Theta^{2}$, $j=1,\cdots,p$, let us take $\eta_{2j-1}\equiv\eta_{2j-1}(\theta_{j},\epsilon)$ and $\eta_{2j}\equiv\eta_{2j}(\theta_{j},\epsilon)$ satisfying
	\begin{equation*}
		(\alpha_{j}+\beta_{j})_{+}<\eta_{2j-1}+\eta_{2j}<(\alpha_{j}+\beta_{j})_{+}+\frac{\epsilon}{p},\ \ \eta_{2j-1},\eta_{2j}\in(0,1),
	\end{equation*}
	for all $j=1,\cdots,p$. 
	Note that we have $\overline{\psi}_{p}(\theta)<\sum_{j=1}^{2p}\eta_{j}<\overline{\psi}_{p}(\theta)+\epsilon$ for $\theta=(\alpha_{1},\cdots,\alpha_{p},\beta_{1},\cdots,\beta_{p})$. 
	First we can prove
	\begin{equation*}
		|I_{n,1}^{\pi}(\theta)|\lesssim_{u} n^{\sum_{j=1}^{2p}\eta_{j}}\sum_{i=1}^{2p}\int_{\Pi^{2p}\cap{W}_{i}}f_{n}(\mathbf{x})\,\mathrm{d}\mathbf{x}
	\end{equation*}
	using Lemma~\ref{lemma_Dn}, where $\mathbf{x}=(x_{1},\cdots,x_{2p})$ and 
	\begin{equation*}
		f_{n}(\mathbf{x}):=\prod_{j=1}^p|\overline{x}_{2j-1}|^{-\alpha_{j}}
		{\left(|\overline{x}_{2j}|^{-\beta_{j}}+|\overline{x}_{2j-1}|^{-\beta_{j}}\right)}\cdot\prod_{j=1}^{2p}|\overline{x}_{j+1}-\overline{x}_{j}|^{\eta_{j}-1}.
	\end{equation*}
	Here we denote by $\overline{x}_{2p+1}:=x_{1}$ for notational simplicity. 
	In the similar way to the proof of Proposition~6.1 of \cite{Fox-Taqqu-1987}, we can also prove
	\begin{equation*}
		\sum_{i=1}^{2p}\int_{\Pi^{2p}\cap{W}_{i}}{f}_{n}(\mathbf{x})\,\mathrm{d}\mathbf{x}\lesssim_{u}1.
	\end{equation*}
	This completes the proof.
	\subsection{Proof of Proposition~\ref{key_proposition}~(\ref{key_proposition_Wc})}\label{Section_proof_key_prop2}
	Before proving Proposition~\ref{key_proposition}~(\ref{key_proposition_Wc}), we prepare the following lemma. 
	Denote by the hyperplane $H_{1}:=\{(x_{1},\cdots,x_{2p})\in\mathbb{R}^{2p}:x_{1}=0\}$ and set $A:=\Pi^{2p}\cap{W}^{c}\cap{H}_{1}^{c}$.
	\begin{lemma}\label{key_lemma_W-tilda-c}
		Let $p\in\mathbb{N}$. Assume $g_{j}\in{F}$ and $h_{j}\in{F}^{(1)}$ for all $j=1,\cdots,p$. 
		Then for any 
		$\pi=(\pi_1,\cdots,\pi_p)\in\{0,1\}^p$,
		\begin{equation*}
			\left|\prod_{j=1}^pk_j^{(\pi_j)}(\bar{x}_{2j-1})\right|\lesssim_{u}|x_{1}|^{-\psi_{p}(\theta)-|\pi|}\prod_{j=1}^p|x_{2j}|^{\pi_{j}}
		\end{equation*}
		uniformly on $A\times\Theta_{1}^{2p}$, where we write $|\pi|:=\sum_{j=1}^{p}\pi_{j}$.
	\end{lemma}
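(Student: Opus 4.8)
The plan is to reduce the estimate to a single geometric fact—that on $A=\Pi^{2p}\cap W^{c}\cap H_{1}^{c}$ every partial sum $\overline{x}_{j}$ is comparable to $|x_{1}|$—and then to read off the bound from the size conditions built into $F$ and $F^{(1)}$ together with the mean value theorem.

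First I would establish the comparability. On $W^{c}=\bigcap_{j=1}^{2p}W_{j}^{c}$ we have $|x_{j+1}|<c^{-1}|\overline{x}_{j}|$ for $j=1,\dots,2p-1$, so the triangle inequality gives $(1-c^{-1})|\overline{x}_{j}|\le|\overline{x}_{j+1}|\le(1+c^{-1})|\overline{x}_{j}|$; since $x_{1}\neq0$ on $H_{1}^{c}$, induction yields $(1-c^{-1})^{j-1}|x_{1}|\le|\overline{x}_{j}|\le(1+c^{-1})^{j-1}|x_{1}|$ for all $j$, hence $|\overline{x}_{j}|\asymp|x_{1}|$ with constants depending only on $c$ and $p$. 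The one genuinely delicate point is that $g_{j}$ and $h_{j}$ are evaluated as $2\pi$-periodic functions, so the bounds coming from $F$ and $F^{(1)}$ apply to the representative $\widetilde{x}\in\Pi$ of $\overline{x}_{2j-1}$ rather than to $\overline{x}_{2j-1}$ itself. To control this I would take $c$ large enough (depending on $p$) that $(1+c^{-1})^{2p-1}<\tfrac32$, which forces $|\overline{x}_{j}|<\tfrac32\pi$ for every $j$; then each $\overline{x}_{2j-1}$ stays strictly away from the nonzero multiples of $2\pi$, so $|\widetilde{x}|\asymp|\overline{x}_{2j-1}|\asymp|x_{1}|$ and the segment joining $\overline{x}_{2j-1}$ to $\overline{x}_{2j}$ keeps clear of the origin and its periodic copies.

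Next I would bound the two types of factors. For $\pi_{j}=0$, using $g_{j}\in F$ and $h_{j}\in F^{(1)}\subset F$ gives $|k_{j}^{(0)}(\overline{x}_{2j-1})|=|g_{j}(\overline{x}_{2j-1})h_{j}(\overline{x}_{2j-1})|\lesssim_{u}|\overline{x}_{2j-1}|^{-\alpha_{j}-\beta_{j}}\asymp|x_{1}|^{-(\alpha_{j}+\beta_{j})}$. For $\pi_{j}=1$, I write $k_{j}^{(1)}(\overline{x}_{2j-1})=g_{j}(\overline{x}_{2j-1})\bigl(h_{j}(\overline{x}_{2j})-h_{j}(\overline{x}_{2j-1})\bigr)$ and apply the mean value theorem to the increment (splitting the segment at $\pm\pi$ if it is crossed), producing a point $\xi$ with $h_{j}(\overline{x}_{2j})-h_{j}(\overline{x}_{2j-1})=h_{j}'(\xi)x_{2j}$. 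Since $|x_{2j}|<c^{-1}|\overline{x}_{2j-1}|$ on $W_{2j-1}^{c}$, the segment does not cross the origin and $|\xi|\asymp|\overline{x}_{2j-1}|\asymp|x_{1}|$, so the $F^{(1)}$ bound $|h_{j}'(\xi)|\lesssim_{u}|\xi|^{-\beta_{j}-1}$ yields $|k_{j}^{(1)}(\overline{x}_{2j-1})|\lesssim_{u}|x_{1}|^{-(\alpha_{j}+\beta_{j})-1}|x_{2j}|$. In either case the $j$-th factor is $\lesssim_{u}|x_{1}|^{-(\alpha_{j}+\beta_{j})-\pi_{j}}|x_{2j}|^{\pi_{j}}$.

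Finally I would multiply the $p$ factor-bounds: the exponents of $|x_{1}|$ sum to $-\sum_{j}(\alpha_{j}+\beta_{j})-\sum_{j}\pi_{j}=-\psi_{p}(\theta)-|\pi|$ and the leftover factors combine to $\prod_{j}|x_{2j}|^{\pi_{j}}$, which is exactly the claim. Compact uniformity on $\Theta_{1}^{2p}$ comes essentially for free: converting $|\overline{x}_{j}|^{-\alpha_{j}}$ and $|\xi|^{-\beta_{j}-1}$ into powers of $|x_{1}|$ only costs factors of the form $(1\pm c^{-1})^{(j-1)\gamma}$ with $\gamma$ ranging over a compact set as $\theta$ runs through a compact subset, hence bounded uniformly. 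I expect the main obstacle to be precisely the periodicity bookkeeping of the second paragraph—certifying that no partial sum wraps past a nonzero multiple of $2\pi$ and that the mean value theorem may be applied on a segment staying clear of the singularity; once the comparability $|\overline{x}_{j}|\asymp|\widetilde{x}|\asymp|x_{1}|$ is secured, the rest is routine multiplication of power bounds.
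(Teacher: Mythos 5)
Your proof follows essentially the same route as the paper's: the chain inequality $c_{-}|\overline{x}_{j-1}|\leq|\overline{x}_{j}|\leq c_{+}|\overline{x}_{j-1}|$ derived from $W^{c}$, the power bounds from $F$ and $F^{(1)}$ for the $\pi_{j}=0$ and $\pi_{j}=1$ factors, the mean value theorem on a segment kept away from the origin by $|x_{2j}|<c^{-1}|\overline{x}_{2j-1}|$, and multiplication of the factor bounds. The only difference is your explicit bookkeeping of the $2\pi$-periodic representative of $\overline{x}_{2j-1}$ via a large choice of $c$ -- a point the paper's proof leaves implicit -- which is a refinement of the same argument rather than a different one.
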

	\begin{proof}
		Fix $(\pi_{1},\cdots,\pi_{p})\in\{0,1\}^{p}$. We can assume $\overline{x}_{j}\neq 0$ for all $j=1,\cdots,2p$ without loss of generality. Indeed, $(x_{1},\cdots,x_{2p})\in{W}^{c}$ implies
		\begin{equation}\label{key_inequality_21}
			c_{-}|\overline{x}_{j-1}|\leq|\overline{x}_{j}|\leq c_{+}|\overline{x}_{j-1}|,\ \ j=2,3,\cdots,2p,
		\end{equation} 
		where $c_{\pm}:=1\pm c^{-1}$.  
		Note that $c_\pm>0$ thanks to $c>1$. Therefore $x_{1}\neq 0$ implies $\overline{x}_{j}\neq 0$ for all $j=1,\cdots,2p$. 
		Moreover, $(x_{1},\cdots,x_{2p})\in{W}^{c}\cap{H}_{1}^{c}$ also implies $|\overline{x}_{j}+u(\overline{x}_{j+1}-\overline{x}_{j})|\geq c_{-}|\overline{x}_{j}|>0$ for any $u\in[0,1]$ and $j=1,\cdots,2p-1$ so that the mean-value theorem gives
		\begin{equation}\label{key_inequality_22}
			\left|h_{j}(\overline{x}_{2j})-h_{j}(\overline{x}_{2j-1})\right|
			\leq|x_{2j}|\sup_{c_{-}|\overline{x}_{2j-1}|\leq x\leq c_{+}|\overline{x}_{2j-1}|}\left|\frac{\mathrm{d}h_{j}}{\mathrm{d}x}(x)\right|
			\lesssim_{u}|x_{2j}||\overline{x}_{2j-1}|^{-\beta_{j}-1}
		\end{equation}
		uniformly on $A\times\Theta_{1}^{2p}$ thanks to (\ref{key_inequality_21}). Therefore we obtain 
		\begin{equation*}
			\left|\prod_{j=1}^{p}k_{j}^{(\pi_{j})}(\overline{x}_{2j-1})\right|
			\lesssim_{u}\prod_{j=1}^p|x_{2j}|^{\pi_{j}}|\overline{x}_{2j-1}|^{-(\alpha_{j}+\beta_{j})-\pi_{j}}
			\lesssim_{u}|x_{1}|^{-\psi_{p}(\theta)-|\pi|}\prod_{j=1}^p|x_{2j}|^{\pi_{j}}
		\end{equation*}
		uniformly on $A\times\Theta_{1}^{2p}$ thanks to (\ref{key_inequality_21}) and (\ref{key_inequality_22}). This completes the proof.
	\end{proof}
	\begin{proof}[Proof of Proposition~$\ref{key_proposition}~(\ref{key_proposition_Wc})$]
		From the assumption, there exists $J\in\{1,\cdots,p\}$ such that $\pi_{J}=1$. 
		Note that Lemma~\ref{key_lemma_W-tilda-c}, the definition of $W$ and (\ref{key_inequality_21}) imply 
		\begin{equation}\label{key_inequality_Wc}
			\left|\prod_{j=1}^pk_j^{(\pi_j)}(\bar{x}_{2j-1})\right|
			\lesssim_{u}|x_{1}|^{-\psi_{p}(\theta)_{+}-1}|x_{2J}|
		\end{equation}
		uniformly on $A\times\Theta_{1}^{2p}$. 
		Since the Lebesgue measure of $H_{1}$ is zero and Lemma~\ref{lemma_Dn} gives
		\begin{equation*}
			\int_{\Pi^{2p-2}}
			\left|D_{n}(\overline{x}_{2p}-x_{1})^{\ast}\prod_{j=2}^{2p}D_{n}(x_{j})\right|\,\mathrm{d}x_{2}\cdots\,\mathrm{d}x_{2J-1}\,\mathrm{d}x_{2J+1}\cdots\,\mathrm{d}x_{2p}\leq(\log{n})^{2p-2}L_{n}(x_{2J})^{2},
		\end{equation*}
		we can show
		\begin{align*}
			\left|I_{n,2}^{\pi}(\theta)\right|
			&\lesssim_{u}\int_{A}|x_{1}|^{-\psi_{p}(\theta)_{+}-1}|x_{2J}|\left|D_{n}(\overline{x}_{2p}-x_{1})^{\ast}\prod_{j=2}^{2p}D_{n}(x_{j})\right|\,\mathrm{d}x_{1}\cdots\,\mathrm{d}x_{2p}\\
			&\lesssim_{u}(\log{n})^{2p-2}\int_{A_{J}}|x_{1}|^{-\psi_{p}(\theta)_{+}-1}|x_{2J}|L_{n}(x_{2J})^{2}\,\mathrm{d}x_{1}\,\mathrm{d}x_{2J}
		\end{align*}
		thanks to (\ref{key_inequality_Wc}), where
		\begin{equation*}
			A_{J}:=\{(x_{1},x_{2J})\in\Pi^{2}:|x_{2J}|<c_{J}|x_{1}|,x_{1}\neq 0\},\ \ c_{J}:=c^{-1}(c_{+})^{2J-2}.
		\end{equation*}		
		Note that we can write
		\begin{equation*}
			\int_{A_{J}}|x_{1}|^{-\psi_{p}(\theta)_{+}-1}|x_{2J}|L_{n}(x_{2J})^{2}\,\mathrm{d}x_{1}\,\mathrm{d}x_{2J}
			=4(I_{1}^{n}+I_{2}^{n}),
		\end{equation*}
		where
		\begin{align*}
			&J_{1}^{n}(\theta):=\int_{0}^{(nc_{J})^{-1}}\,\mathrm{d}x_{1}\,x_{1}^{-\psi_{p}(\theta)_{+}-1}\int_{0}^{c_{J}x_{1}}\,\mathrm{d}x_{2J}\,x_{2J}L_{n}(x_{2J})^{2},\\
			&J_{2}^{n}(\theta):=\int_{(nc_{J})^{-1}}^{\pi}\,\mathrm{d}x_{1}\,x_{1}^{-\psi_{p}(\theta)_{+}-1}\int_{0}^{c_{J}x_{1}}\,\mathrm{d}x_{2J}\,x_{2J}L_{n}(x_{2J})^{2}.
		\end{align*}
		In the rest of the proof, we evaluate $J_{1}^{n}(\theta)$ and $J_{2}^{n}(\theta)$ respectively. First, by a straightforward calculation, we obtain
		\begin{equation*}
			J_{1}^{n}(\theta)
			\lesssim_{u} n^{2}\int_{0}^{(nc_{J})^{-1}}x_{1}^{1-\psi_{p}(\theta)_{+}}\,\mathrm{d}x_{1}
			\lesssim_{u} n^{\psi_{p}(\theta)_{+}}.
		\end{equation*}
		Moreover, if $x_{1}>(nc_{J})^{-1}$, then
		\begin{equation*}
			\int_{0}^{c_{J}x_{1}}x_{2J}L_{n}(x_{2J})^{2}\,\mathrm{d}x_{2J}
			\lesssim 1+\int_{{\frac{1}{n}}}^{c_{J}x_{1}}x_{2J}^{-1}\,\mathrm{d}x_{2J}
			\lesssim |\log{x_{1}}|+\log{n}.
		\end{equation*}
		Therefore we also obtain
		\begin{align*}
			J_{2}^{n}(\theta)
			&\lesssim_{u} \int_{(nc_{J})^{-1}}^{\pi}x_{1}^{-\psi_{p}(\theta)_{+}-1}(|\log{x_{1}}|+\log{n})\,\mathrm{d}x_{1}\\
			&\lesssim_{u} n^{\psi_{p}(\theta)_{+}}\int_{1}^{nc_{J}\pi}u^{-\psi_{p}(\theta)_{+}-1}(\log{u}+\log{n})\,\mathrm{d}u\\
			&\lesssim_{u} n^{\psi_{p}(\theta)_{+}}\int_{1}^{nc_{J}\pi}u^{-1}(\log{u}+\log{n})\,\mathrm{d}u\lesssim_{u}n^{\psi_{p}(\theta)_{+}}(\log{n})^{2}.
		\end{align*}
		This completes the proof. 
	\end{proof}
	\subsection{Proof of Proposition~\ref{key_proposition}~(\ref{key_proposition_main_term})}
	Fix $\epsilon>0$. Denote by $k_{j}:=k_{j}^{(0)}$ for notational simplicity. First note that we can write
	\begin{equation*}
		I_{n}(\theta)
		=(2\pi)^{p}\int_{\Pi^{p}}\prod_{j=1}^{p}k_{j}(y_{j})D_{n}(y_{j}-y_{j+1})\,\mathrm{d}y_{1}\cdots\,\mathrm{d}y_{p}
	\end{equation*}
	using Lemmas~\ref{lemma_Dn} and \ref{lem_err_rep}. Set
	\begin{equation*}
		s_{J}(y):=\prod_{j=p-J+1}^{p}k_{j}(y),\ \ \overline{s}_{J}(y):=s_{J}(y_{p-J+1})-s_{J}(y)
	\end{equation*}
	for $J=1,\cdots,p$. Then we can show
	\begin{align*}
		&I_{n}(\theta)\\
		&=(2\pi)^{p}\int_{\Pi^{p}}\left[s_{1}(y_{p-1})+\overline{s}_{1}(y_{p-1})\right]\prod_{j=1}^{p-1}k_{j}(y_{j})\prod_{j=1}^{p}D_{n}(y_{j}-y_{j+1})\,\mathrm{d}y_1\cdots\,\mathrm{d}y_{p}\\
		&=(2\pi)^{p+1}\int_{\Pi^{p-1}}s_{2}(y_{p-1})\prod_{j=1}^{p-2}k_{j}(y_{j})\cdot D_{n}(y_{p-1}-y_{1})\prod_{j=1}^{p-2}D_{n}(y_{j}-y_{j+1})\,\mathrm{d}y_{1}\cdots\,\mathrm{d}y_{p-1}+o_{u}(n^{\overline{\psi}_{p}(\theta)+\epsilon})
	\end{align*}
	as $n\to\infty$ using Lemma~\ref{lemma_Dn} and Proposition~\ref{key_proposition}~(\ref{key_proposition_W})-(\ref{key_proposition_Wc}). 
	Therefore we can easily prove
	\begin{equation*}
		I_{n}(\theta)
		=(2\pi)^{2p-1}n\int_{\Pi}s_{p}(y_{1})\,\mathrm{d}y_{1}+o_{u}(n^{\overline{\psi}_{p}(\theta)+\epsilon})\ \ \mbox{as $n\to\infty$}
	\end{equation*}
	using Lemma~\ref{lemma_Dn} and Proposition~\ref{key_proposition}~(\ref{key_proposition_W})-(\ref{key_proposition_Wc}) repeatedly.  
	This completes the proof.
\bibliographystyle{acmtrans-ims}
\bibliography{myref_tkbtk}

\appendix
\section{Proof of Lemma~\ref{lem_err_rep}}\label{Appendix_Lemma}
By a straightforward calculation, we can write
	\begin{align*}
		T_{n}(\theta)&:=\mathrm{Tr}\left[\prod_{j=1}^pT_n(g_{j})T_n(h_{j})\right]\\
		&=\sum_{t_{1},\cdots,t_{2p}=1}^n\prod_{j=1}^p\widehat{g_{j}}(t_{2j-1}-t_{2j})\widehat{h_{j}}(t_{2j}-t_{2j+1})\\
		&=\int_{\Pi^{2p}}\prod_{j=1}^{p}g_{j}(y_{2j-1})h_{j}(y_{2j})\cdot\prod_{j=1}^{2p}D_{n}(y_{j}-y_{j-1})\,\mathrm{d}y_{1}\cdots\,\mathrm{d}y_{2p},
	\end{align*}
	where we denote by $t_{2p+1}:=t_{1}$ for notational simplicity, because we can show
	\begin{equation*}
		\sum_{t_{1},\cdots,t_{2p}=1}^{n}\prod_{j=1}^{2p}e^{\sqrt{-1}(t_{j}-t_{j+1})y_{j}}
		=\sum_{t_{1},\cdots,t_{2p}=1}^{n}\prod_{j=1}^{2p}e^{\sqrt{-1}(y_{j}-y_{j-1})t_{j}}
		=\prod_{j=1}^{2p}D_{n}(y_{j}-y_{j-1}).
	\end{equation*}
	Moreover, by the change of variables $x_{1}=y_1$ and $x_{j}=y_j-y_{j-1}$ for $j=2,\cdots,2p$, we can rewrite
	\begin{align*}
		T_{n}(\theta)&=\int_{[-\pi,\pi]}\,\mathrm{d}x_{1}\int_{[-\pi-x_{1},\pi-x_{1}]}\,\mathrm{d}x_2\cdots\int_{[-\pi-\overline{x}_{2p-1},\pi-\overline{x}_{2p-1}]}\,\mathrm{d}x_{2p}\\
		&\hspace{2cm}\prod_{j=1}^pg_{j}(\overline{x}_{2j-1})h_{j}(\overline{x}_{2j})D_{n}(\overline{x}_{2p}-x_{1})^\ast\prod_{j=2}^{2p}D_{n}(x_{j})\\
		&=\int_{\Pi^{2p}}\prod_{j=1}^pg_{j}(\overline{x}_{2j-1})h_{j}(\overline{x}_{2j})D_{n}(\overline{x}_{2p}-x_{1})^\ast\prod_{j=2}^{2p}D_{n}(x_{j})\,\mathrm{d}x_{1}\cdots\,\mathrm{d}x_{2p}
	\end{align*}
	since $y_j=y_1+\sum_{r=2}^{j}(y_{r}-y_{r-1})=\sum_{r=1}^{j}x_r=\bar{x}_{j}$, where we used the $2\pi$-periodicity of $D_{n}$, $g_{j}$ and $h_{j}$ for $j=1,\cdots,p$ in the last equality. 
	This completes the proof.
	
\end{document}